\documentclass[11pt,reqno]{amsart}
\usepackage[foot]{amsaddr}
\usepackage{amssymb,amsmath,amsfonts,mathptmx,cite,mathrsfs,graphicx,rotating,url}
\usepackage{mathdots}

\newtheorem{theorem}{Theorem}
\newtheorem{cor}[theorem]{Corollary}

\theoremstyle{definition}

\newtheorem*{remark}{Remark}

\usepackage{tikz}
\usetikzlibrary{arrows}
\usetikzlibrary{arrows,shapes}

\DeclareSymbolFont{rsfscript}{OMS}{rsfs}{m}{n}
\DeclareSymbolFontAlphabet{\mathrsfs}{rsfscript}

\newcommand{\ais}{ai-semi\-ring}

\newcommand{\mC}{\mathcal{C}}

\newcommand{\mS}{\mathcal{S}}
\newcommand{\mT}{\mathcal{T}}

\predisplaypenalty=0

\makeatletter

\renewcommand*\subjclass[1]{\def\@subjclass{#1}{\@xp\let\@xp\subjclassname\csname subjclassname@#1\endcsname}}

\renewcommand{\subjclassname}{\textup{2020} Mathematics Subject Classification}

\makeatother

\title[A matrix representation for Catalan semirings]{A new Boolean matrix representation\\ for Catalan semirings}
\author{Mikhail Volkov}

\address{{\normalfont 620075 Ekaterinburg, Russia}}
\email{m.v.volkov@urfu.ru}

\date{}

\thanks{Supported by the Ministry of Science and Higher Education of the Russian Federation, project FEUZ-2023-2022}

\subjclass{16Y60, 15B34, 06F05, 20M20}
\keywords{Catalan monoid, Catalan semiring, Boolean matrix representation}
\begin{document}

\begin{abstract}
We construct a faithful representation of the semiring of all order-preserving decreasing transformations of a chain with $n+1$ elements by Boolean upper triangular $n\times n$-matrices.
\end{abstract}

\maketitle

\subsection*{Catalan monoids} Let $[n]$ stand for the set $\{1,2,\dots,n\}$ of the first $n$ positive integers. We consider the set $[n]$ with its usual order: $1<2<\dots<n$. A transformation $\alpha\colon[n]\to[n]$ is \emph{order-preserving} if $i\le j$ implies $i\alpha\le j\alpha$ for all $i,j\in[n]$ and \emph{extensive} if $i\le i\alpha$ for every $i\in[n]$. Clearly, if two transformations have either of the properties of being order-preserving or extensive, then so does their product, and the identity transformation has both properties. Hence, the set $C_n$ of all extensive order-preserving transformations of $[n]$ forms a submonoid in the monoid of all transformations of $[n]$. This submonoid often appears in the literature as the \emph{Catalan} monoid. The name is justified by the cardinality of $C_n$ being the $n$-th Catalan number $\frac1{n+1}\binom{2n}n$; see \cite[Theorem 14.2.8(i)]{GaMa09}.

A transformation $\alpha\colon[n]\to[n]$ is called \emph{decreasing} (or \emph{parking}) if $i\alpha\le i$ for all $i\in[n]$. The set $C^-_n$ of all decreasing order-preserving transformations of $[n]$ also forms a submonoid in the monoid of all transformations of $[n]$. The submonoid $C^-_n$ differs from the submonoid $C_n$ if $n>1$, but the two monoids are isomorphic because $C^-_n$ is nothing more than the monoid of all extensive order-preserving transformations of the set $\{1,2,\dots,n\}$ equipped with the `opposite' order $1>2>\dots>n$. Slightly abusing terminology, we collectively refer to monoids in both series $\{C_n\}_{n=1,2,\dotsc}$ and $\{C^-_n\}_{n=1,2,\dotsc}$ as Catalan monoids.

Catalan monoids have been intensively studied from various viewpoint (and under various names); we mention \cite{DHST11,GaoZhangLuo22,GSV25,Hi93,HiTh09,LuoJinZhang25,SaVo23,So96,Ts90,Vo04} as samples of such studies.

\subsection*{Catalan semirings} An \emph{additively idempotent semiring} (\ais, for short) is an algebra $(A,+,\cdot)$ with binary addition $+$ and multiplication $\cdot$ such that the additive reduct $(A,+)$ is a commutative and idempotent semigroup, the multiplicative reduct $(A,\cdot)$ is a semigroup, and multiplication distributes over addition on the left and right.

The set $O_n$ of all order-preserving transformations of $[n]$ is a submonoid in the monoid of all transformations of $[n]$; clearly, both $C_n$ and $C^-_n$ are submonoids of the monoid $O_n$. We define, for all $\alpha,\beta\in O_n$ and $i\in[n]$,
\[
i(\alpha+\beta):=\max\{i\alpha,i\beta\}.
\]
(Here and below the sign $:=$ stands for equality by definition; thus, $A:=B$ means that $A$ is defined as $B$.) It is well known (and easy to verify) that $\alpha+\beta\in O_n$. Equipped with this addition, $O_n$ becomes an \ais{} in which both $C_n$ and $C^-_n$ form subsemirings. Hence, Catalan monoids admit a natural \ais{} structure. We denote the \ais{}s defined on $C_n$ and $C^-_n$ by $\mC_n$ and $\mC^-_n$, respectively, and refer to them as \emph{Catalan} semirings. Notice that while for each $n$, the monoids $C_n$ and $C^-_n$ are isomorphic, the semirings $\mC_n$ and $\mC^-_n$ are not isomorphic if $n>1$, and moreover, their additive reducts are not isomorphic if $n>2$.

For $\alpha,\beta\in O_n$, let
\begin{equation}\label{eq:semilattice order}
\alpha\leqslant \beta\iff \alpha+\beta=\beta\iff i\alpha\le i\beta\ \text{ for all }\  i\in[n].
\end{equation}
The relation $\leqslant$ is a partial order on the set $O_n$, under which $O_n$ becomes a distributive lattice. The addition on $O_n$ can be recovered from the order \eqref{eq:semilattice order}---for all $\alpha,\beta\in O_n$, their sum $\alpha+\beta$ coincides with the least upper bound of $\alpha$ and $\beta$ with respect to $\leqslant$.

For illustration, the next picture shows the Hasse diagram of the lattice $(O_3,\leqslant)$. Each transformation $\alpha\in O_3$ is encoded by the triple $1\alpha 2\alpha 3\alpha$.
\begin{center}
\begin{tikzpicture}[x=1.6cm,y=1.0cm, every node/.style={font=\small}]
  \node (111) at (0,0) {$111$};

  \node (112) at (0,1) {$112$};

  \node (113) at (-1,2) {$113$};
  \node (122) at (1,2) {$122$};

  \node (123) at (0,3) {$123$};
  \node (222) at (2,3) {$222$};

  \node (133) at (-1,4) {$133$};
  \node (223) at (1,4) {$223$};

  \node (233) at (0,5) {$233$};

  \node (333) at (0,6) {$333$};

  \draw (111) -- (112);

  \draw (112) -- (113);
  \draw (112) -- (122);

  \draw (113) -- (123);
  \draw (122) -- (123);
  \draw (122) -- (222);

  \draw (123) -- (133);
  \draw (123) -- (223);
  \draw (222) -- (223);

  \draw (133) -- (233);
  \draw (223) -- (233);

  \draw (233) -- (333);
\end{tikzpicture}
\end{center}
In particular, the identity transformation $\varepsilon$ is encoded by the triple 123. The five transformations $\alpha$ with $\alpha\geqslant\varepsilon$ form the semiring $\mC_3$, while the five transformations $\beta$ with $\beta\leqslant\varepsilon$ constitute the semiring $\mC^-_3$. The general picture looks the same: the transformations $\alpha\in O_n$ with $\alpha\geqslant\varepsilon$ form the semiring $\mC_n$, while the transformations $\beta\in O_n$ with $\beta\leqslant\varepsilon$ constitute the semiring $\mC^-_n$.

\subsection*{Semirings of Boolean matrices} A \emph{Boolean} matrix is a square matrix with entries $0$ and~$1$ only. The addition and multiplication of such matrices are as usual, except that addition and multiplication of the entries are defined as $x+y:=\max\{x,y\}$ and $x\cdot y:=\min\{x,y\}$. For each $n$, the set of all Boolean $n\times n$-matrices forms an \ais{} under matrix addition and multiplication.

A Boolean matrix $\bigl(a_{ij}\bigr)_{n\times n}$ is called \emph{upper triangular} if $a_{ij}=0$ for all $1\le j<i\le n$. The set of all upper triangular Boolean $n\times n$-matrices is closed under matrix addition and multiplication so it forms a subsemiring of the \ais{} of all Boolean $n\times n$-matrices. We denote by $T_n$ and $\mT_n$ the monoid and, respectively, the \ais{} of all upper triangular Boolean $n\times n$-matrices.

By a \emph{faithful representation} of a semigroup $(A,\cdot)$, respectively, of a \ais{} $(A,+,\cdot)$ in the monoid $T_n$, respectively, in the \ais{} $\mT_n$, we mean an injective map $A\to T_n$ which is a semigroup, respectively, semiring homomorphism.

\subsection*{A faithful representation $\mC_n\to\mT_n$} Any transformation $\alpha\colon[n]\to[n]$ can be represented as the Boolean $n\times n$-matrix $B(\alpha)$ whose entry in position $(i,j)$ is 1 if $i\alpha=j$ and 0 otherwise. The map $\alpha\mapsto B(\alpha)$ is an injective homomorphism from the monoid of all transformations of $[n]$ in the monoid of all Boolean $n\times n$-matrices. If $\alpha$ is extensive, then $i\le i\alpha$ whence the matrix $B(\alpha)$ is upper triangular. Therefore, the restriction of $\alpha\mapsto B(\alpha)$ to the monoid $C_n$ is a faithful representation of $C_n$ in the monoid $T_n$ of all upper triangular Boolean $n\times n$-matrices. However, the map $\alpha\mapsto B(\alpha)$ fails to respect addition so that it is not a representation of the semiring $\mC_n$ in the semiring $\mT_n$.

Ond\v{r}ej Kl\'ima and Libor Pol\'ak (see \cite[Section~5]{KP10}) found another faithful representation of the monoid $C_n$ in the monoid $T_n$. (The same representation was rediscovered by Marianne Johnson and Peter Fenner \cite[Lemma 5.1]{JF19} and again by Yan Feng Luo, Zhen Feng Jin, and Wen Ting Zhang \cite[Theorem 5.4]{LuoJinZhang25}.) Namely, let $S_n$ stand for the set of all \emph{stair triangular} matrices, that is, upper triangular Boolean matrices $\bigl(a_{ij}\bigr)_{n\times n}$ satisfying: $a_{ii}=1$ for all $i=1,\dots,n$, and if $a_{ij} = 1$ for some $1\le i < j\le n$, then
\[
a_{ii+1}=\cdots=a_{ij}=a_{i+1j}=\cdots=a_{j-1j}=1.
\]
The set $S_n$ is closed under the usual multiplication of Boolean matrices and contains the identity matrix. Thus, $S_n$ forms a monoid and the map $\alpha\mapsto S(\alpha)$, where the $(i,j)$-th entry of the matrix $S(\alpha)$ equals 1 if and only if $i\le j\le i\alpha$, is an isomorphism between the monoids $C_n$ and $S_n$.

It is easy to see that the set $S_n$ is also closed under the entry-wise addition of Boolean matrices, so it also forms a subsemiring $\mS_n$ in the semiring $\mT_n$. Although this was not explicitly mentioned in \cite{KP10}, the above isomorphism  $\alpha\mapsto S(\alpha)$ between the monoids $C_n$ and $S_n$ is, in fact, an isomorphism between the \ais{}s $\mC_n$ and $\mS_n$, hence, a faithful representation of the semiring $\mC_n$ in the semiring $\mT_n$.

\subsection*{A faithful representation $\mC^-_{n+1}\to\mT_n$} A Boolean matrix $\bigl(a_{ij}\bigr)_{n\times n}$ is called \emph{lower triangular} if $a_{ij}=0$ for all $1\le i<j\le n$. The set of all lower triangular Boolean $n\times n$-matrices also forms a subsemiring of the \ais{} of all Boolean $n\times n$-matrices; we denote this subsemiring by $\mT^-_n$. If $P$ is the $n\times n$-matrix with 1s on the antidiagonal and 0s elsewhere,
\[
P:=\begin{pmatrix}
0 & 0 & \cdots & 0 & 1\\
0 & 0 & \cdots & 1 & 0\\
\vdots & \vdots & \iddots & \vdots & \vdots\\
0 & 1 & \cdots & 0 & 0\\
1 & 0 & \cdots & 0 & 0\\
\end{pmatrix},
\]
then it is known (and easy to verify) that the map $M\mapsto PMP$ is an isomorphism between the \ais{}s $\mT^-_n$ and $\mT_n$. Therefore, to obtain a faithful representation of the semiring $\mC^-_{n+1}$ in $\mT_n$, it suffices to give a faithful representation in $\mT^-_n$, which is notationally easier.

To each transformation $\alpha\in\mC^-_{n+1}$, we assign a Boolean $n\times n$-matrix $M(\alpha)$ whose $i$-th row consists of $(i+1)\alpha-1$ left-justified 1s; in other words, the $i$-th row is the unary representation of the number $(i+1)\alpha-1$, left-justified. The rule can also be expressed by saying that the $(i,j)$-th entry of the matrix $M(\alpha)$ equals 1 if and only if $(i+1)\alpha-1\ge j$.

For illustration, the next table shows the map $\alpha\mapsto M(\alpha)$ for transformations in $\mC^-_3$.
\[
\begin{array}{c|c|c|c|c|c|}
\text{Transformation}\ \alpha\in\mC^-_3&&&&&\\[-.5em]
\raisebox{5pt}{(encoded as \  $1\alpha 2\alpha 3\alpha$)} & \raisebox{10pt}{111} & \raisebox{10pt}{112} & \raisebox{10pt}{113} & \raisebox{10pt}{122} & \raisebox{10pt}{123} \\[-1em]
& \downarrow & \downarrow & \downarrow & \downarrow & \downarrow \\[-1em]
& & & & & \\
\text{Boolean matrix } M(\alpha)&
\begin{pmatrix}0 & 0 \\ 0 & 0\end{pmatrix} &
\begin{pmatrix}0 & 0 \\ 1 & 0\end{pmatrix} &
\begin{pmatrix}0 & 0 \\ 1 & 1\end{pmatrix} &
\begin{pmatrix}1 & 0 \\ 1 & 0\end{pmatrix} &
\begin{pmatrix}1 & 0 \\ 1 & 1\end{pmatrix}
\end{array}
\]

\begin{theorem}
\label{thm:construction}
The map $\alpha\mapsto M(\alpha)$ is a faithful representation of the semiring $\mC^-_{n+1}$ in the semiring $\mT^-_n$.
\end{theorem}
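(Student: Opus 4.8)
The plan is to encode each transformation $\alpha\in\mC^-_{n+1}$ by the function $f_\alpha\colon[n]\to\{0,1,\dots,n\}$ given by $f_\alpha(i):=(i+1)\alpha-1$, so that, by definition, $M(\alpha)$ is the matrix whose $i$-th row carries $1$'s in columns $1,\dots,f_\alpha(i)$ and $0$'s elsewhere. I would first collect the elementary properties of this encoding: since $\alpha$ is order-preserving, $f_\alpha$ is non-decreasing; since $\alpha$ is decreasing, $f_\alpha(i)\le i$ (in particular $1\alpha=1$, so nothing is lost by not recording $1\alpha$); and conversely every non-decreasing $f\colon[n]\to\{0,1,\dots,n\}$ with $f(i)\le i$ arises as $f_\alpha$ for a unique $\alpha\in C^-_{n+1}$, obtained by setting $1\alpha:=1$ and $(i+1)\alpha:=f(i)+1$. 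Two consequences are immediate. The inequality $f_\alpha(i)\le i$ shows that the only $1$'s of $M(\alpha)$ in row $i$ sit in columns $\le i$, so $M(\alpha)\in\mT^-_n$. And since $f_\alpha(i)$ equals the number of $1$'s in the $i$-th row of $M(\alpha)$, the matrix $M(\alpha)$ determines $f_\alpha$, hence $\alpha$; thus $\alpha\mapsto M(\alpha)$ is injective.

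Additivity is routine. From $i(\alpha+\beta)=\max\{i\alpha,i\beta\}$ one reads off $f_{\alpha+\beta}(i)=\max\{f_\alpha(i),f_\beta(i)\}$, so the $(i,j)$-entry of $M(\alpha+\beta)$ is $1$ if and only if $f_\alpha(i)\ge j$ or $f_\beta(i)\ge j$, which is exactly the condition for the $(i,j)$-entry of $M(\alpha)+M(\beta)$ to be $1$.

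The substance of the proof is multiplicativity, $M(\alpha\beta)=M(\alpha)M(\beta)$. I would first translate composition of transformations into composition of the encoding functions: adopting the convention $f_\beta(0):=0$, a one-line case split according to whether $(i+1)\alpha=1$ or $(i+1)\alpha>1$ yields $f_{\alpha\beta}(i)=f_\beta\bigl(f_\alpha(i)\bigr)$ for every $i\in[n]$. On the matrix side, writing $M(\alpha)=(a_{ij})$ and $M(\beta)=(b_{ij})$, the $(i,j)$-entry of the Boolean product $M(\alpha)M(\beta)$ equals $1$ if and only if $a_{ik}=b_{kj}=1$ for some $k\in[n]$, that is, if and only if there is $k\in[n]$ with $k\le f_\alpha(i)$ and $j\le f_\beta(k)$. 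Here the monotonicity of $f_\beta$ is the key: such a $k$ exists if and only if $f_\alpha(i)\ge1$ and $j\le f_\beta\bigl(f_\alpha(i)\bigr)$, the optimal witness being $k=f_\alpha(i)$. Comparing this with the formula $f_{\alpha\beta}(i)=f_\beta(f_\alpha(i))$ --- and observing that when $f_\alpha(i)=0$ both $M(\alpha)M(\beta)$ and $M(\alpha\beta)$ have a zero $i$-th row, which is precisely what the convention $f_\beta(0)=0$ encodes --- finishes the verification. I expect this collapse of the Boolean ``there exists an index $k$'' to the single maximal index $k=f_\alpha(i)$ to be the conceptual core; the degenerate case $(i+1)\alpha=1$ is the only point that needs a moment's care, and everything else is bookkeeping.
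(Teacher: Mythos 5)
Your proposal is correct and follows essentially the same route as the paper: both arguments reduce multiplicativity to collapsing the Boolean sum's existential quantifier to the single maximal witness $k=(i+1)\alpha-1=f_\alpha(i)$, using that $\beta$ is order-preserving. Your reformulation via the encoding function $f_\alpha$ and the composition law $f_{\alpha\beta}=f_\beta\circ f_\alpha$ is only a cosmetic repackaging, though you are slightly more explicit than the paper about the degenerate case $(i+1)\alpha=1$, where the nominal witness $k=0$ falls outside the index range and both sides have a zero row.
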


\begin{proof}
As $\alpha\in\mC^-_{n+1}$ is a decreasing transformation, $(i+1)\alpha\le i+1$ for each $i=1,\dots,n$. Hence,
$(i+1)\alpha-1\le i$ and one must have $j\le i$ for the condition $(i+1)\alpha-1\ge j$ to hold. Thus, only the $i$ first entries in the $i$-th row of the matrix $M(\alpha)$ may be non-zero, and therefore, $M(\alpha)$ is a lower triangular matrix.

It is clear that the map $\alpha\mapsto M(\alpha)$ is one-to-one and preserves addition. To verify that $M(\alpha\beta)=M(\alpha)M(\beta)$ for all transformations $\alpha,\beta\in\mC^-_{n+1}$, let $M(\alpha):=\bigl(a_{ij}\bigr)_{n\times n}$, $M(\beta):=\bigl(b_{ij}\bigr)_{n\times n}$, $M(\alpha\beta):=\bigl(c_{ij}\bigr)_{n\times n}$, and $M(\alpha)M(\beta):=\bigl(d_{ij}\bigr)_{n\times n}$. For all $i,j\in\{1,\dots,n\}$,
\begin{align}
 \notag d_{ij}=\sum_k^{n} a_{ik}b_{kj}=1&\iff \exists k\ (a_{ik}=1\ \& \ b_{kj}=1)\\
 \notag &\iff \exists k\  \bigl((i+1)\alpha-1\ge k\ \& \  (k+1)\beta-1\ge j\bigr)\\
 \label{eq:statement}&\iff \exists k\  \bigl((i+1)\alpha\ge k+1\ \& \  (k+1)\beta\ge j+1\bigr).
\end{align}
The statement \eqref{eq:statement} is equivalent to the inequality
\begin{equation}
    (i+1)\alpha\beta\ge j+1.\label{eq:product}
\end{equation}
Indeed, if $(i+1)\alpha\ge k+1$ for some $k$, then $(i+1)\alpha\beta\ge (k+1)\beta$ since $\beta$ is order-preserving. Combining the latter inequality with $(k+1)\beta\ge j+1$ yields \eqref{eq:product}. Conversely, \eqref{eq:product} implies that \eqref{eq:statement} holds with $k:=(i+1)\alpha-1$.

The inequality \eqref{eq:product} is equivalent to $(i+1)\alpha\beta-1\ge j$, and the latter inequality is equivalent to $c_{ij}=1$ by the definition of the map $\alpha\mapsto M(\alpha)$. Hence $d_{ij}=1\iff c_{ij}=1$, and the matrices $M(\alpha)M(\beta)$ and $M(\alpha\beta)$ coincide.
\end{proof}

\begin{remark} The lower triangular Boolean matrices in the image of bijection $\alpha\mapsto M(\alpha)$ are in a natural one-to-one correspondence with the Young diagrams contained in the staircase shape $(n,n-1,\dots,1)$. The next illustration shows these matrices together with the corresponding Young diagrams (drawn in the French convention---rows are left-justified and counted from bottom to top) for $n=2$.

\def\boxsize{0.6cm}

\newcommand{\drawBounding}{%
  \draw[very thick,black] (0,0) rectangle ++(\boxsize,\boxsize);
  \draw[very thick,black] (\boxsize,0) rectangle ++(\boxsize,\boxsize);
  \draw[very thick,black] (0,\boxsize) rectangle ++(\boxsize,\boxsize);
}

\[
\begin{array}{cccccc}
&
\begin{pmatrix}0 & 0 \\ 0 & 0\end{pmatrix} &
\begin{pmatrix}0 & 0 \\ 1 & 0\end{pmatrix} &
\begin{pmatrix}0 & 0 \\ 1 & 1\end{pmatrix} &
\begin{pmatrix}1 & 0 \\ 1 & 0\end{pmatrix} &
\begin{pmatrix}1 & 0 \\ 1 & 1\end{pmatrix} \\[-.6em]
& & & & & \\
& \downarrow & \downarrow & \downarrow & \downarrow & \downarrow \\
& & & & & \\[-.6em]
& 
\begin{tikzpicture}
  \drawBounding;
\end{tikzpicture}
& 
\begin{tikzpicture}
   \fill[fill=lightgray] (0,0) rectangle ++(\boxsize,\boxsize);
   \drawBounding;
\end{tikzpicture}
& 
\begin{tikzpicture}
   \fill[fill=lightgray] (0,0) rectangle ++(\boxsize,\boxsize);
  \fill[fill=lightgray] (\boxsize,0) rectangle ++(\boxsize,\boxsize);
  \drawBounding;
\end{tikzpicture}
& 
\begin{tikzpicture}
   \fill[fill=lightgray] (0,0) rectangle ++(\boxsize,\boxsize);
  \fill[fill=lightgray] (0,\boxsize) rectangle ++(\boxsize,\boxsize);
  \drawBounding;
\end{tikzpicture}
& 
\begin{tikzpicture}
  \fill[fill=lightgray] (0,0) rectangle ++(\boxsize,\boxsize);
  \fill[fill=lightgray] (\boxsize,0) rectangle ++(\boxsize,\boxsize);
  \fill[fill=lightgray] (0,\boxsize) rectangle ++(\boxsize,\boxsize);
  \drawBounding;
\end{tikzpicture}
\end{array}
\]
\smallskip

\noindent The established bijection with $\mC^-_{n+1}$ shows that the number of Young diagrams contained in the staircase shape $(n,n-1,\dots,1)$ equals the $(n+1)$-st Catalan number. This fact is known; see \cite[Exercise 167]{Stanley}.
\end{remark}

\begin{cor}
\label{cor:C_(n+1)toT(n)}
The map $\alpha\mapsto PM(\alpha)P$ is a faithful representation of the semiring $\mC^-_{n+1}$ in the semiring $\mT_n$.
\end{cor}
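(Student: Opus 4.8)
The plan is to realize the asserted map as a composite of two maps that are already available. By Theorem~\ref{thm:construction}, the assignment $\alpha\mapsto M(\alpha)$ is a faithful representation of the semiring $\mC^-_{n+1}$ in $\mT^-_n$; that is, it is an injective semiring homomorphism $\mC^-_{n+1}\to\mT^-_n$. As recalled just before that theorem, the map $\Phi\colon M\mapsto PMP$ is an isomorphism of \ais{}s from $\mT^-_n$ onto $\mT_n$. I would then invoke the elementary fact that both injectivity and the property of being a semiring homomorphism are preserved under composition, and that an isomorphism is in particular an injective homomorphism; hence the composite $\Phi\circ(\alpha\mapsto M(\alpha))$, which is exactly $\alpha\mapsto PM(\alpha)P$, is an injective semiring homomorphism $\mC^-_{n+1}\to\mT_n$. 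By the definition of a faithful representation given in the excerpt, this is precisely the claim.

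For the sake of completeness I would spell out the only point requiring care, namely the index bookkeeping induced by $P$. Writing $M=\bigl(a_{ij}\bigr)_{n\times n}$, one computes $(PMP)_{ij}=a_{n+1-i,\,n+1-j}$, so if $M$ is lower triangular (i.e. $a_{k\ell}=0$ whenever $k<\ell$) then $(PMP)_{ij}=0$ whenever $n+1-i<n+1-j$, i.e. whenever $j<i$; thus $PMP$ is indeed upper triangular, so $\Phi$ really maps $\mT^-_n$ into $\mT_n$. Since $P^2$ is the identity matrix, $\Phi$ is an involution and in particular a bijection, and the identities $\Phi(M+M')=\Phi(M)+\Phi(M')$ and $\Phi(MM')=\Phi(M)\Phi(M')$ follow by a direct entry-wise check — but all of this is subsumed by the ``known and easy to verify'' statement that $M\mapsto PMP$ is an isomorphism $\mT^-_n\cong\mT_n$, which we are entitled to assume.

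I do not expect any real obstacle: the corollary is a formal consequence of Theorem~\ref{thm:construction} together with the isomorphism $\mT^-_n\cong\mT_n$, and the proof should occupy only two or three lines.
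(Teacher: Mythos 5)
Your proof is correct and is exactly the argument the paper intends: the corollary follows by composing the faithful representation of Theorem~\ref{thm:construction} with the isomorphism $M\mapsto PMP$ from $\mT^-_n$ onto $\mT_n$, which the paper sets up precisely for this purpose just before the theorem. Your index check $(PMP)_{ij}=a_{n+1-i,\,n+1-j}$ is also correct, so there is nothing to add.
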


\subsection*{Optimality} The faithful representation of Corollary \ref{cor:C_(n+1)toT(n)} is somewhat unexpected because of its `squeezing' feature: an object of a larger dimension embeds into one of smaller dimension. This raises the question: Can Catalan monoids or semirings be faithfully represented by Boolean triangular matrices of even smaller size? Our next results show that both Kl\'ima--Pol\'ak's embedding $\mC_n\hookrightarrow\mT_n$ and our embedding $\mC^-_{n+1}\hookrightarrow\mT_n$ already use triangular matrices of the minimum possible size.

\begin{theorem}
\label{thm:optimality}
\emph{(i)} The Catalan monoid $C_{n+2}$ does not embed into the monoid $T_n$.

\emph{(ii)} Neither of the Catalan semirings $\mC_{n+1}$ and $\mC^-_{n+2}$ embeds into the semiring $\mT_n$.
\end{theorem}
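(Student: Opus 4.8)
The plan is to settle part~(ii) by chain counting in the additive reducts, and to treat part~(i), which is genuinely harder, separately. The basic observation is that the additive reduct of $\mT_n$ is the Boolean lattice of all subsets of the $\binom{n+1}{2}$ positions on or above the diagonal, so its longest chain has $\binom{n+1}{2}+1$ elements; whereas the additive reduct of $\mC^-_m$ (respectively $\mC_m$) is the principal order ideal (respectively filter) generated by $\varepsilon$ inside the distributive lattice $(O_m,\leqslant)$, which is graded by $\alpha\mapsto\sum_{i\in[m]}i\alpha$. A short rank computation gives that both $(C^-_m,\leqslant)$ and $(C_m,\leqslant)$ have longest chain of length $\binom{m}{2}+1$. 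Since a faithful representation of one \ais{} in another restricts to an embedding of the additive reducts, hence (because $\leqslant$ is $+$-definable) to a strict order-embedding of the underlying lattices, it follows that if $\binom{m}{2}+1>\binom{n+1}{2}+1$ then neither $\mC_m$ nor $\mC^-_m$ embeds into $\mT_n$; in particular $\mC^-_{n+2}\not\hookrightarrow\mT_n$.

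For $\mC_{n+1}$ the two heights coincide, so I would argue using the position of the identity. Let $\phi\colon\mC_{n+1}\to\mT_n$ be a faithful representation; being additive and injective, $\phi$ is a strict order-embedding. Applying $\phi$ to a saturated chain $\varepsilon=\alpha_0<\alpha_1<\dots<\alpha_N$ of $(C_{n+1},\leqslant)$ with $N=\binom{n+1}{2}$, running upward from the bottom element $\varepsilon$, produces a strictly increasing sequence of $N+1$ matrices in the Boolean lattice $\mT_n$; as $N+1$ is the maximal chain length there, this is forced to be a saturated chain from the zero matrix to the all-ones upper triangular matrix, so $\phi(\varepsilon)=\mathbf 0$. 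But $\varepsilon$ is the multiplicative identity of $\mC_{n+1}$ and $\mathbf 0$ is the multiplicative zero of $\mT_n$, so $\phi(\beta)=\phi(\varepsilon\beta)=\mathbf 0\cdot\phi(\beta)=\mathbf 0$ for every $\beta$, contradicting injectivity. (This does not disturb the genuine embedding $\mC^-_{n+1}\hookrightarrow\mT_n$ of Corollary~\ref{cor:C_(n+1)toT(n)}: there the bottom of the additive order is the constant map, which is the multiplicative \emph{zero}, so the forced equality $\phi(\text{bottom})=\mathbf 0$ is harmless.)

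Part~(i) is the hard part, and I do not see a cheap proof. A faithful representation $\phi\colon C_{n+2}\hookrightarrow T_n$ need only be an injective semigroup homomorphism, so $E:=\phi(\varepsilon)$ is merely an idempotent serving as a two-sided identity for the submonoid $N:=\phi(C_{n+2})\subseteq ET_nE$. My attempt would go through the action of Boolean matrices on $2^{[n]}$: $E$ acts as a retraction onto a join-subsemilattice $L\subseteq 2^{[n]}$, so $C_{n+2}$ is realized faithfully by join-endomorphisms of $L$ with $E$ acting as $\mathrm{id}_L$, and one would try to show that the chain of idempotents $\gamma_{n+2}<_{\mathrm{idem}}\dots<_{\mathrm{idem}}\gamma_1=\varepsilon$ of $C_{n+2}$ (here $\gamma_k\colon i\mapsto\max(i,k)$), together with enough of the remaining idempotents to recover the Boolean-lattice order $\mathbf 2^{n+1}$ on the idempotents of $C_{n+2}$, cannot be accommodated with only $n$ columns.

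The difficulty will be precisely this last combinatorial step. All of the naive monoid invariants are far too generous inside $T_n$ — already $T_3$ carries a chain of six idempotents under $\le_{\mathrm{idem}}$, has $41$ idempotents altogether, is not $\mathcal J$-trivial, and has $\mathcal J$-chains longer than $n+1$ — so cardinality, the number of idempotents, and the lengths of idempotent chains or $\mathcal J$-chains all fail to separate $C_{n+2}$ from a submonoid of $T_n$. Hence part~(i) has to rest on a new argument that exploits both the triangular Boolean structure of $T_n$ and the specific multiplicative interaction in $C_{n+2}$ between the chain $(\gamma_k)$ and the other idempotents; identifying the right "rank-like" quantity is the crux.
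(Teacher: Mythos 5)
Your treatment of part (ii) is correct and genuinely different from the paper's. The paper proves both halves of the theorem by exhibiting identities of $\mT_n$ that fail in the relevant Catalan structure, namely the semigroup identity $x^n=x^{n+1}$ and the two-variable semiring identity $x^{n-1}y^{n-1}=x^ny^{n-1}+x^{n-1}y^n$, both quoted from \cite[Example 2.4]{Vo25}. Your route---bounding the longest chain in the additive reducts (height $\binom{m}{2}+1$ for $(C_m,\leqslant)$ and $(C^-_m,\leqslant)$ versus $\binom{n+1}{2}+1$ for the Boolean lattice underlying $\mT_n$), noting that an injective $+$-homomorphism is an order-embedding, and, in the borderline case $\mC_{n+1}$, observing that a maximal-length chain forces $\phi(\varepsilon)=\mathbf{0}$ and hence collapses the multiplication---is self-contained and avoids importing the semiring identity; all of these steps check out. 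What the paper's identity-based argument buys in exchange is the strengthening recorded in its closing Remark: identities pass to divisors, so non-embeddability upgrades to non-divisibility for free.

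The genuine gap is part (i), which you leave unproved, and your diagnosis of why it should be hard is mistaken. Among the ``naive monoid invariants'' you dismiss, you omit the one that works: the aperiodicity index. Every $A\in T_n$ satisfies $A^n=A^{n+1}$ (the $(i,j)$ entry of $A^k$ records the existence of a weakly increasing path of length $k$ from $i$ to $j$ in $[n]$; any such path with at least $n$ edges must repeat a vertex, i.e.\ use a loop, which can be inserted or deleted at will, so the powers stabilise at $k=n$). Meanwhile the transformation $\alpha\in C_{n+2}$ defined by $i\alpha=i+1$ for $i\le n+1$ and $(n+2)\alpha=n+2$ satisfies $1\alpha^{n}=n+1\ne n+2=1\alpha^{n+1}$, so $C_{n+2}$ violates $x^n=x^{n+1}$ and cannot embed into $T_n$. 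This is exactly the paper's proof of (i), and it simultaneously disposes of the $\mC^-_{n+2}$ half of (ii), since the multiplicative reduct of $\mC^-_{n+2}$ is isomorphic to $C_{n+2}$. The machinery you sketch (the idempotent $E=\phi(\varepsilon)$ acting on $2^{[n]}$, the lattice of idempotents of $C_{n+2}$) is not needed.
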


\begin{proof}
Our argument uses the fact that the identities
\begin{gather}
\label{eq:semigroup identity in Tn}x^n=x^{n+1},\\
\label{eq:semiring identity in Tn} x^{n-1}y^{n-1}=x^ny^{n-1}+x^{n-1}y^n
\end{gather}
hold in the semiring $\mT_n$ \cite[Example 2.4]{Vo25}.

Let $\alpha\in C_{n+2}$ be defined by
\[
i\alpha:=\begin{cases} i+1 &\text{if } i\le n+1,\\
n+2 &\text{if } i=n+2.
\end{cases}
\]
Then $1\alpha^n=n+1$ and $1\alpha^{n+1}=n+2$ whence $\alpha^n\ne\alpha^{n+1}$. Therefore, the identity \eqref{eq:semigroup identity in Tn} fails in the monoid $C_{n+2}$. As \eqref{eq:semigroup identity in Tn} holds in the monoid $T_n$ and identities are inherited by submonoids, $C_{n+2}$ does not embed in $T_n$ and the \ais{} $\mC^-_{n+2}$ whose multiplicative monoid is isomorphic to $C_{n+2}$ does not embed in $\mT_n$.

Let $\beta,\gamma\in C_{n+1}$ be defined by
\[
i\beta:=\begin{cases} i+1 &\text{if } i\le n,\\
n+1 &\text{if } i=n+1;
\end{cases}
\qquad
i\gamma:=\begin{cases} n &\text{if } i\le n,\\
n+1 &\text{if } i=n+1.
\end{cases}
\]
Then $1\beta^{n-1}\gamma^{n-1}=n\gamma^{n-1}=n$, whereas
\[
1\left(\beta^n\gamma^{n-1}+\beta^{n-1}\gamma^n\right)=\max\{1\beta^n\gamma^{n-1},1\beta^{n-1}\gamma^n\}=\max\{n+1,n\}=n+1.
\]
Therefore, the identity \eqref{eq:semiring identity in Tn} fails in the \ais{} $\mC_{n+1}$. As \eqref{eq:semiring identity in Tn} holds in $\mT_n$  and identities are inherited by subsemirings, $\mC_{n+1}$  does not embed in $\mT_n$.
\end{proof}

\begin{remark}
A \emph{divisor} of a monoid [semiring] is defined as a homomorphic image of one of its submonoids [subsemirings]. Since identities are inherited by divisors, the above proof of Theorem \ref{thm:optimality} shows that the results (i) and (ii) remain valid in the stronger form with embedding replaced by division.
\end{remark}

\subsection*{Complementarity} While the embedding $C^-_{n+1}\hookrightarrow T_n$ defined by $\alpha\mapsto PM(\alpha)P$ appears to be new, it is closely related to Kl\'ima--Pol\'ak's embedding and may be viewed, in a sense, as the complement of the embedding $C_{n+1}\hookrightarrow T_{n+1}$ defined by $\alpha\mapsto S(\alpha)$. Let us explain how this complementarity works.

It is easy to see that the lattice $(O_{n+1},\leqslant)$ is selfdual, the duality being the map $\alpha\mapsto\overline{\alpha}$ defined by
\[
i\overline{\alpha}:=n+2-(n+2-i)\alpha \ \text{ for each } \ i=1,2,\dots,n+1.
\]
The map $\alpha\mapsto\overline{\alpha}$ is known to be an automorphism of the monoid $O_{n+1}$, and the restriction of $\alpha\mapsto\overline{\alpha}$ to each of the Catalan monoids $C_{n+1}$ and $C^-_{n+1}$ is an isomorphism onto the other monoid. For $\alpha\in C_{n+1}$, `negate' the upper triangle of the $(n+1)\times(n+1)$-matrix $S(\alpha)$ by replacing all 1s in this triangle by 0s and all 0s with 1; the 0s below the main diagonal remain unchanged. Removing the first column and the last row from the resulting matrix yields an $n\times n$-matrix, which coincides with the matrix $PM(\overline{\alpha})P$.

The following illustration shows all steps of the process for $\alpha=1244\in C_4$. We restrict ourselves to this `proof by example' since a formal verification would merely require unfolding the definitions of all the maps involved.
\begin{center}
\begin{tikzpicture}[scale=1.4, every node/.style={minimum width=2cm, minimum height=2cm, align=center}]

\node (A) at (90:3)   {$\begin{pmatrix}1&0&0&0\\
0&1&0&0\\
0&0&1&1\\
0&0&0&1
\end{pmatrix}$};  
\node (B) at (30:3)   {$\begin{pmatrix}0&1&1&1\\
0&0&1&1\\
0&0&0&0\\
0&0&0&0
\end{pmatrix}$};  
\node (C) at (330:3)  {$\begin{pmatrix}1&1&1\\
0&1&1\\
0&0&0
\end{pmatrix}$};  
\node (D) at (270:3)  {$\begin{pmatrix}0&0&0\\
1&1&0\\
1&1&1
\end{pmatrix}$};  
\node (E) at (210:3)  {\begin{tikzpicture}[scale=0.5]
\useasboundingbox (1,2) rectangle (2.2,2.6); 
\foreach \x in {0.5,2.5} \node at (\x,0.5) {\tiny 1};
\foreach \x in {0.5,2.5} \node at (\x,1.5) {\tiny 2};
\foreach \x in {0.5,2.5} \node at (\x,2.5) {\tiny 3};
\foreach \x in {0.5,2.5} \node at (\x,3.5) {\tiny 4};
    \draw (0.75,0.5) edge[->] (2.25,0.5);
	\draw (0.75,1.5) edge[->] (2.25,0.5);
    \draw (0.75,2.5) edge[->] (2.25,2.5);
    \draw (0.75,3.5) edge[->] (2.25,3.5);
    \node at (1.5,-0.15)  {$\overline{\alpha}$};
\end{tikzpicture}};  
\node (F) at (150:3)  {\begin{tikzpicture}[scale=0.5]
\useasboundingbox (1,0) rectangle (2.2,2.6); 
\foreach \x in {0.5,2.5} \node at (\x,0.5) {\tiny 1};
\foreach \x in {0.5,2.5} \node at (\x,1.5) {\tiny 2};
\foreach \x in {0.5,2.5} \node at (\x,2.5) {\tiny 3};
\foreach \x in {0.5,2.5} \node at (\x,3.5) {\tiny 4};
    \draw (0.75,0.5) edge[->] (2.25,0.5);
	\draw (0.75,1.5) edge[->] (2.25,1.5);
    \draw (0.75,2.5) edge[->] (2.25,3.5);
    \draw (0.75,3.5) edge[->] (2.25,3.5);
\node at (1.5,4.15)   {$\alpha$};
\end{tikzpicture}};  

\draw[->, thick] (A) -- node[sloped, above, yshift=-4ex] {`negation'} (B);
\draw[->, thick] (B) -- node[align=center, right] {Removing the first\\ column and\\ the last row}(C);
\draw[<-, thick] (C) --  node[sloped, above, yshift=-4ex] {$PM(\overline{\alpha})P$} (D);
\draw[<-, thick] (D) --  node[sloped, above, yshift=-4ex] {$M(\overline{\alpha})$} (E);
\draw[<->, thick] (E) -- (F);
\draw[->, thick] (F) --  node[sloped, above, yshift=-4ex] {$S(\alpha)$} (A);

\end{tikzpicture}
\end{center}

\small

\end{document}